\newtheorem{thm}{Theorem}[section]
\newtheorem*{fixed point criterion}{\fixed point criterion}
\newtheorem{cor}[thm]{Corollary}
\newtheorem{lem}[thm]{Lemma}
\newtheorem{prop}[thm]{Proposition}
\theoremstyle{definition}
\newtheorem{defn}[thm]{Definition}
\newtheorem{exam}[thm]{Example}
\theoremstyle{remark}
\newtheorem{rem}[thm]{Remark}
\numberwithin{equation}{section}
\newcommand{\Real}{\mathbb R}
\newcommand{\Z}{\mathbb Z}
\newcommand{\fix}{\mathrm{Fix}}
\newcommand{\aut}{\mathrm{Aut}}
\newcommand{\edo}{\mathrm{End}}
\newcommand{\mon}{\mathrm{Mon}}
\newcommand*\bigcdot{\mathpalette\bigcdot@{1.5}}
\newcommand*\bigcdot@[2]{\mathbin{\vcenter{\hbox{\scalebox{#2}{$\m@th#1\bullet$}}}}}
\begin{document}

\title[Test elements for monomorphisms of free groups]{A note on test elements for monomorphisms of free groups}

\author{Dongxiao Zhao}
	\address{School of Mathematics and Statistics, Xi'an Jiaotong University, Xi'an 710049, CHINA}
	\email{zdxmath@stu.xjtu.edu.cn\\ORCID: 0009-0006-7919-9244}

\author{Qiang Zhang}
\address{School of Mathematics and Statistics, Xi'an Jiaotong University, Xi'an 710049, CHINA}
\email{zhangq.math@mail.xjtu.edu.cn\\ORCID: 0000-0001-6332-5476}

\thanks{The authors are partially supported by NSFC (No. 12471066 ) and the Shaanxi Fundamental Science Research Project for Mathematics and Physics (No. 23JSY027).}

\subjclass[2010]{20F65, 20F34, 57M07.}

\keywords{}

\date{\today}
\begin{abstract}
A word in a group is called a test element if any endomorphism fixing it is necessarily an automorphism.  In this note, we give a sufficient condition in geometry to construct test elements for monomorphisms of a free group, by using the Whitehead graph and the action of the free group on its Cayley graph.
\end{abstract}
\maketitle

\section{Introduction}

For a group $G$, let $\edo(G)$ (resp. $\mon(G), \aut(G)$) denote the set of all the endomorphisms (resp. monomorphisms, automorphisms) of $G$.
The concept of test elements was introduced by Shpilrain\cite{Sh94}: an element $w\in G$ is called a \textit{test element} in $G$ if  for any $\phi \in \edo(G)$, $\phi(w)=w$ implies $\phi\in\aut(G).$ Moreover,  $w$ is called a \textit{test element for monomorphisms}, if for any monomorphism $\phi\in\mon(G)$, $\phi(w)=w$ implies $\phi \in \aut(G)$.  
Note that if $w\in G$ is a test element (for monomorphisms), then for every $\phi \in \aut(G)$, $\phi(w)$ is again a test element  (for monomorphisms).

For test elements in free groups,  we have:

\begin{exam}\label{commutator is test element}
 Suppose $F_{r}$ is a free group of rank $r$ with basis $\{x_1,x_2,\ldots,x_{r}\}$. Then the element  $a_1^ka_2^k\cdots a_r^k (k\geq 2)$ is a test element.  Moreover, if $r=2n$ is even, then the commutator
 $$[x_1,x_2][x_3,x_4]\cdots[x_{2n-1},x_{2n}]$$ is a test element. See \cite{Zie66, Sh94, Tu96} etc. 
\end{exam}

Turner  \cite{Tu96} studied the test elements in a free group $F_r$ and proved the free group satisfying the \textit{Retract Theorem}: $w \in F_r$ is a test element if and only if $w$ is not in any proper retract of $F_r$. So a group satisfying the Retract Theorem is called a \textit{Turner group}.  Note that the Retract Theorem also holds for any monounary algebra \cite{JP07}. 

There are many studies on Turner groups in the literature: O'Neill and Turner \cite{OT00,OT01} studied the test elements in direct products and showed that torsion-free stably hyperbolic groups and finitely generated Fuchsian groups are both Turner groups.
Moreover, Groves \cite{Gro12} showed the assumption "stably" can be removed:  all torsion-free hyperbolic groups are Turner groups. Snopce and Tanushevski studied test elements in a group by considering its pro-$p$ completion and showed that finitely generated profinite groups are Turner groups; furthermore, they showed the set of test elements of a free group and a surface group of genus $n \geq 2$ (orientable) or $n \geq 3$ (non-orientable) is dense in the profinite topology, see \cite{ST171, ST172, ST173}.
Note that O'Neill \cite{ON19} showed the solvable Baumslag-Solitar group $BS_{1,n}=\langle a,t | tat^{-1}=a^n \rangle $ is not a Turner group. 

In this note, we study the test elements for monomorphisms in a free group $F_r$. Note that Turner \cite{Tu96} showed that $a \in F_r$ is a test element for monomorphisms if and only if $a$ is not in any proper free factor of $F_r$. For a given element $a \in F_r$ and a test element $w \in F_r$, we will give a sufficient condition in geometry to determine whether $a$ is a test element for monomorphisms, by using the Whitehead graph and the action of $F_r$ on its Cayley graph, see Section \ref{sect 2} and Section \ref{sect 3} for more details.

\section{Simple words and Whitehead graphs}\label{sect 2}

Let $F_r$ be a free group of rank $r$ and let $\mathcal{S}$ be a basis of  $F_r$.

\subsection{Reduced words, free factors and simple words}

Every element $u\in F_r$ is presented by a unique reduced word in the letters of $\mathcal{S} \cup \mathcal{S}^{-1}$, the length of this reduced word is denoted by $|u|_\mathcal{S}$. We write $=$ to denote equality in $F_r$ and $\equiv$ to denote equality as words.  A word $u_1$ is a \textit{prefix} of $u$, if there is a word $u_2$ such that $u\equiv u_1 u_2$.
Every element $w \in F_r$ has a \textit{cyclic reduction} $w_0$, obtained by writing
$w \equiv w_1w_0w_1^{-1}$ with $w_1$ as long as possible. An element is \textit{cyclically reduced} (with respect to the basis $\mathcal{S}$ ) if $w \equiv w_0$.  

A subgroup $H\leq F_r$ is a \textit{free factor} if it is generated by a subset of a basis, and is a \textit{proper free factor} if $H\neq F_r$. An element in $F_r$ is called \textit{primitive} if it belongs to some basis of $F_r$, and is called\textit{ simple} if it belongs to some proper free factor of $F_r$.  Note that a primitive element is always simple, and a power of a simple element is again simple . 

 Note that $|u|_\mathcal{S}=|\phi(u)|_\mathcal{\phi(S)}$ for every automorphism $\phi \in \aut(F_r)$,  we have:
 
\begin{lem}\label{def of minimizes length}
Let $u \in F_r$. A basis $\mathcal{S}$ minimizes $|u|_\mathcal{S}$, i.e., $|u|_{\mathcal{S}}\leq |u|_{\mathcal{S}'}$ for any basis $\mathcal{S}'$ of $F_r$, if and only if  $|u|_\mathcal{S} \leq |\phi(u)|_\mathcal{S}$ for every automorphism $\phi \in \aut(F_r)$.
\end{lem}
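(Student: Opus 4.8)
The plan is to leverage the displayed identity $|u|_{\mathcal{S}} = |\phi(u)|_{\phi(\mathcal{S})}$ (valid for every $\phi \in \aut(F_r)$ and every $u$) to translate the quantifier ``over all bases $\mathcal{S}'$'' into the quantifier ``over all automorphisms $\phi$,'' after which the two conditions become literally the same inequality. The bridge between them is the standard fact that $\aut(F_r)$ acts transitively on the bases of $F_r$: every basis $\mathcal{S}'$ is of the form $\phi(\mathcal{S})$ for some $\phi \in \aut(F_r)$, since any bijection from $\mathcal{S}$ onto $\mathcal{S}'$ extends to an automorphism.

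First I would record the key rewriting. Given a basis $\mathcal{S}'$, choose $\phi \in \aut(F_r)$ with $\phi(\mathcal{S}) = \mathcal{S}'$. Applying the displayed identity to the element $\phi^{-1}(u)$ gives
$$|\phi^{-1}(u)|_{\mathcal{S}} = |\phi(\phi^{-1}(u))|_{\phi(\mathcal{S})} = |u|_{\mathcal{S}'}.$$
Conversely, for any $\psi \in \aut(F_r)$ the set $\mathcal{S}'' := \psi^{-1}(\mathcal{S})$ is again a basis, and the same computation (with $\phi = \psi^{-1}$) yields $|u|_{\mathcal{S}''} = |\psi(u)|_{\mathcal{S}}$.

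These two computations show that the two sets of nonnegative integers
$$\{\, |u|_{\mathcal{S}'} : \mathcal{S}' \text{ a basis of } F_r \,\} \quad \text{and} \quad \{\, |\psi(u)|_{\mathcal{S}} : \psi \in \aut(F_r) \,\}$$
coincide. Both of them contain $|u|_{\mathcal{S}}$ itself (take $\mathcal{S}' = \mathcal{S}$, respectively $\psi = \id$). Hence $|u|_{\mathcal{S}}$ is a lower bound for the first set if and only if it is a lower bound for the second, which is exactly the claimed equivalence: $\mathcal{S}$ minimizes $|u|_{\mathcal{S}'}$ over all bases if and only if $|u|_{\mathcal{S}} \leq |\psi(u)|_{\mathcal{S}}$ for all $\psi \in \aut(F_r)$.

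I expect the only point requiring care to be the bookkeeping with the direction of $\phi$ versus $\phi^{-1}$: one must apply the identity to $\phi^{-1}(u)$ rather than to $u$ so that the left-hand bases match, and then verify that the correspondence $\mathcal{S}' \leftrightarrow \psi$ is genuinely onto in both directions, so that the two value sets are actually equal rather than merely nested. No deeper obstacle arises, since the transitivity of the automorphism action on bases and the length-preservation identity together do all the work.
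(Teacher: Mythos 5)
Your proof is correct and follows essentially the same route as the paper, which states this lemma as an immediate consequence of the identity $|u|_{\mathcal{S}}=|\phi(u)|_{\phi(\mathcal{S})}$; your write-up simply makes explicit the transitivity of $\aut(F_r)$ on bases and the resulting equality of the two value sets, which the paper leaves implicit.
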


\begin{lem}\label{min length lem}
Let $u \in F_r$. If the basis $\mathcal{S}$ minimizes $|u|_\mathcal{S}$, then $u$ is cyclically reduced with respect to $\mathcal{S}$.
\end{lem}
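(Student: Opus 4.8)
The plan is to argue by contraposition, reducing cyclic reducedness to the length inequality already characterized in Lemma \ref{def of minimizes length}. Suppose, toward a contradiction, that $u$ is not cyclically reduced with respect to $\mathcal{S}$. Then in its (unique) reduced form over $\mathcal{S}\cup\mathcal{S}^{-1}$ the first and last letters are mutually inverse, so I may write $u \equiv x v x^{-1}$ for a single letter $x \in \mathcal{S}\cup\mathcal{S}^{-1}$ and a reduced word $v$; since $u$ itself is reduced, $v$ neither begins with $x^{-1}$ nor ends with $x$, and $|u|_\mathcal{S} = |v|_\mathcal{S} + 2$.

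The key step is to exhibit a single automorphism that shortens $u$. I would take the inner automorphism $\phi \in \aut(F_r)$ given by conjugation, $\phi(g) = x^{-1} g x$. Computing as words, $\phi(u) \equiv x^{-1}(x v x^{-1})x$, and the boundary letters cancel exactly (because $v$ does not begin with $x^{-1}$ or end with $x$), leaving $\phi(u) \equiv v$. Hence $|\phi(u)|_\mathcal{S} = |v|_\mathcal{S} = |u|_\mathcal{S} - 2 < |u|_\mathcal{S}$.

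By Lemma \ref{def of minimizes length}, a basis $\mathcal{S}$ minimizes $|u|_\mathcal{S}$ if and only if $|u|_\mathcal{S} \leq |\phi(u)|_\mathcal{S}$ for every $\phi \in \aut(F_r)$. The inner automorphism just constructed violates this inequality, so $\mathcal{S}$ cannot be a minimizing basis, a contradiction. Therefore a minimizing basis forces $u$ to be cyclically reduced. I expect the only point needing care is the bookkeeping in the first step: one must confirm that the cancellation is exactly by the two outer letters and that no further cancellation occurs, which is precisely guaranteed by $u \equiv x v x^{-1}$ being reduced. There is no substantive obstacle beyond this observation, since the shortening automorphism is the obvious conjugation and the characterization in Lemma \ref{def of minimizes length} does all the remaining work.
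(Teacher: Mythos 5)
Your proof is correct and is essentially the paper's argument: both hinge on the observation that conjugating away the cancelling outer letter shortens $u$ by $2$. The paper phrases this as passing to the conjugated basis $\mathcal{S}'=\{s_1 s s_1^{-1}: s\in\mathcal{S}\}$ and comparing $|u|_{\mathcal{S}'}<|u|_{\mathcal{S}}$ directly, while you apply the inner automorphism $g\mapsto x^{-1}gx$ and invoke Lemma \ref{def of minimizes length}; these are the same move in the two dual formulations that Lemma \ref{def of minimizes length} identifies.
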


\begin{proof}
	Suppose $u$ is not cyclically reduced, then there is a presentation
 $$u \equiv s_1 w s_1^{-1},$$
	where $s_1 \in \mathcal{S}$. We can find another basis of $F_r$, denoted by $\mathcal{S}'$ whose elements are obtained by the conjugation with $s_1$ of the elements in $\mathcal{S}$. It is clear that $\mathcal{S}'$ is a basis of $F_r$, and $|u|_{\mathcal{S}'}<|u|_\mathcal{S}$ which contradicts our assumption.
\end{proof}

\begin{prop}\label{commutator is minimal}
Suppose $F_{r}$ is a free group of rank $r$ with basis $\mathcal{S}=\{x_1,x_2,\ldots,x_{r}\}$. 
\begin{enumerate}
    \item Let $u=x_1^2 x_2^2\cdots x_r^2$. Then $\mathcal{S}$ minimizes $|u|_\mathcal{S}$.

\item If $r=2n$ is even, and $w=[x_1,x_2][x_3,x_4]\cdots[x_{2n-1},x_{2n}],$
 then $\mathcal{S}$ minimizes $|w|_\mathcal{S}$.
\end{enumerate}
\end{prop}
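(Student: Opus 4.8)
The plan is to apply Whitehead's minimization theory. By Lemma \ref{def of minimizes length}, proving that $\mathcal{S}$ minimizes $|u|_\mathcal{S}$ is the same as proving that $u$ realizes the minimal length in its orbit under $\aut(F_r)$, i.e. $|u|_\mathcal{S}\leq|\phi(u)|_\mathcal{S}$ for every $\phi\in\aut(F_r)$; the identical reformulation applies to $w$. Both $u=x_1^2\cdots x_r^2$ and $w=[x_1,x_2]\cdots[x_{2n-1},x_{2n}]$ are visibly cyclically reduced, so by Whitehead's peak-reduction theorem it suffices to check that no Whitehead automorphism of the second kind decreases the cyclic length. I would encode this through the \emph{Whitehead graph} $\Gamma$, whose vertex set is $\mathcal{S}\cup\mathcal{S}^{-1}$ and which carries, for each cyclically consecutive pair $y_iy_{i+1}$ of letters of the word, an edge joining $y_i$ to $y_{i+1}^{-1}$ (the precise convention is immaterial for what follows).

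The computational engine is the length-change formula: for a Whitehead automorphism $\tau=(A,a)$ with $a\in A$ and $a^{-1}\notin A$, one has $|\tau(u)|-|u|=E(A,A^{c})-d(a)$, where $E(A,A^{c})$ is the number of edges of $\Gamma$ crossing the cut $\bigl(A,(\mathcal{S}\cup\mathcal{S}^{-1})\setminus A\bigr)$ and $d(a)$ is the degree of the vertex $a$. Equivalently, I could simply invoke Whitehead's cut-vertex lemma: a cyclically reduced word whose Whitehead graph is connected and has no cut vertex is of minimal length. In either formulation the whole problem collapses to understanding the two graphs.

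For part (1) the cyclic word is $x_1x_1x_2x_2\cdots x_rx_r$, so $\Gamma$ consists of the edges $\{x_i,x_i^{-1}\}$ coming from the repeated letters together with the linking edges $\{x_i,x_{i+1}^{-1}\}$ (indices mod $r$) coming from the junctions $x_ix_{i+1}$. A direct check shows every vertex has degree exactly $2$, and tracing the edges shows $\Gamma$ is a single $2r$-cycle running through all of $\mathcal{S}\cup\mathcal{S}^{-1}$. For part (2) each commutator block $x_{2j-1}x_{2j}x_{2j-1}^{-1}x_{2j}^{-1}$ contributes three internal edges together with one edge linking it to the next block; again one verifies that $\Gamma$ is $2$-regular and, by an analogous trace, that it is a single $4n$-cycle. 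In both cases $\Gamma$ is connected and $2$-regular, hence a single cycle with no cut vertex; since $d(a)=2$ for every vertex while any proper nonempty cut of a cycle crosses at least two edges, the formula yields $|\tau(u)|-|u|=E(A,A^{c})-2\geq0$, and likewise for $w$. Thus no Whitehead move shortens either word, and minimality follows.

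I expect the main obstacle to be the bookkeeping that shows each $\Gamma$ is a single cycle rather than a disjoint union of smaller cycles, since a disconnected Whitehead graph would in fact signal non-minimality; getting the edge/vertex conventions consistent so that the degree count and the cut inequality match the length-change formula exactly is the delicate point. A secondary technical issue is justifying the length-change formula in the degenerate cases where some $\tau(y_i)$ is a single letter that cancels entirely at a junction; this is standard in Whitehead's theory but should be acknowledged rather than glossed over.
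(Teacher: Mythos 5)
Your proposal is correct, but it takes a genuinely different route from the paper. The paper's proof is purely algebraic: after the same reduction via Lemma \ref{def of minimizes length}, it quotes the fact that $u$ and $w$ are test elements (Example \ref{commutator is test element}), so by Proposition \ref{Turner2} each image $\phi(u)$, $\phi(w)$ lies in no proper free factor and hence every generator $x_i^{\pm 1}$ occurs in its reduced form; it then forces each generator to occur at least \emph{twice} by a parity argument in the abelianization --- for $u$ via the characteristic subgroup $\widetilde F_r=\{x^2y \mid x\in F_r,\ y\in[F_r,F_r]\}$, which gives $\pi(\phi(u))\in(2\Z)^r$, and for $w$ via $\phi(w)\in[F_{2n},F_{2n}]$, which forces both $x_i$ and $x_i^{-1}$ to appear. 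This yields $|\phi(u)|_\mathcal{S}\geq 2r$ and $|\phi(w)|_\mathcal{S}\geq 4n$ with no Whitehead machinery at all, at the cost of leaning on the externally quoted test-element property. Your proof instead runs Whitehead's peak reduction with the length-change formula $|\tau(u)|-|u|=E(A,A^c)-\deg(a)$ and the explicit computation that both Whitehead graphs are single cycles ($2$-regular, connected, all edge multiplicities $1$); your graph computations are correct, and since any nonempty proper vertex cut of a cycle is crossed by at least two edges while every degree equals $2$, no Whitehead move shortens either word. Your route is self-contained modulo standard Whitehead theory, needs no test-element input, and as a bonus exhibits directly that the graphs have no cut vertex (the property the paper later extracts from this proposition via Lemma \ref{Bestvina}). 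One small point to add: peak reduction controls \emph{cyclic} length, so conclude with the observation that $u,w$ are cyclically reduced and cyclic length bounds word length from below, giving $|u|_\mathcal{S}\leq\|\phi(u)\|_{\mathrm{cyc}}\leq|\phi(u)|_\mathcal{S}$.

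However, you should delete the claim that this is ``equivalently'' obtained by invoking a cut-vertex criterion, namely that \emph{a cyclically reduced word whose Whitehead graph is connected and has no cut vertex is of minimal length}. That statement is false, and it is not what Whitehead's cut-vertex lemma (Lemma \ref{Whitehead}) says: the lemma concerns simplicity, not minimality, and the minimality statement in the literature (Lemma \ref{Bestvina0}) goes in the opposite direction. A concrete counterexample: in $F_2$ the cyclically reduced word $v=xy^{-1}xy^{-1}x^{-1}y$ has Whitehead graph whose underlying simple graph is the $4$-cycle $x$--$y$--$x^{-1}$--$y^{-1}$--$x$ (connected, no cut vertex), yet the automorphism $x\mapsto x$, $y\mapsto yx$ sends $v$ to $y^{-2}x^{-1}yx$ of length $5<6$; equivalently, the Whitehead move $\tau=(\{x,y\},x)$ has $E(A,A^c)=2<3=\deg(x)$ because of the doubled edges $\{x,y\}$ and $\{x^{-1},y^{-1}\}$. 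What makes your argument work is not the absence of cut vertices but the exact equality $\deg(a)=2$ for every vertex of the \emph{multigraph}, i.e.\ the single-cycle structure with all multiplicities $1$; absence of cut vertices only guarantees $E(A,A^c)\geq 2$, which is useless once some vertex has degree greater than $2$. So keep the length-change formulation as the actual engine and strike the ``equivalent'' one.
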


\begin{proof}
By Lemma \ref{def of minimizes length}, to prove $\mathcal{S}$ minimizes $|u|_\mathcal{S}$, it  is equivalent to prove
$|u|_\mathcal{S} \leq |\phi(u)|_\mathcal{S}$ for every automorphism $\phi \in \aut(F_r)$.  

(1) Let $[F_r,F_r]$ be the commutator subgroup of $F_r$ and let
$$\widetilde F_r:=\{x^2y|x \in F_r, y \in [F_r,F_r]\}.$$
We claim that $\widetilde F_r$ is a characteristic subgroup of $F_r$.  Indeed,  any finite product of square elements can be presented as a product of a square element and an element in $[F_r,F_r]$, i.e., for any $a, b\in F_r$, the product
$$a^2b^2=(ab)^{2}\cdot b^{-1}\cdot a^{-1}b^{-1}ab\cdot b\in (ab)^2[F_r, F_r]\subset \widetilde F_r.$$
Moreover, for any $y, t\in[F_r,F_r]$, we have $b^2yt^{-1}b^{-2}\in [F_r,F_r]$ and hence
$$(a^2y)(b^2t)^{-1}=a^2b^{-2}\cdot b^2yt^{-1}b^{-2}\in \widetilde F_r,$$
which implies that $\widetilde F_r$ is a subgroup of $F_r$. It is clear $\phi(\widetilde F_r)=\widetilde F_r$ for any $\phi\in\aut(F_r)$,  and hence our claim holds.

Now we consider the abelianization  $\pi: F_r\to \Z^r$. For any $\phi \in \aut(F_r)$, note that $u=x_1^2 x_2^2\cdots x_r^2\in \widetilde F_r$ and $\pi(\widetilde F_r) \subset (2\Z)^r$, we have $\phi(u) \in \widetilde F_r$ and
\begin{equation}\label{eq. phi(w) even}
  \pi(\phi(u)) \in (2\Z)^r.
\end{equation}
Moreover, Example \ref{commutator is test element} showed that $u$ is a test element in $F_r$, thus $\phi(u) \in \widetilde F_r$ is also a test element in $F_r$, and cannot belong to any proper free factor of $F_r$ by Proposition \ref{Turner2}. So for any $1 \leq i \leq r$, $x_i$ or $x_i^{-1}$ occurs at least once in the reduced presentation of $\phi(u)$ with respect to $\mathcal{S}$. Then combining Equation (\ref{eq. phi(w) even}), we have 
$|\phi(u)|_\mathcal{S}\geq 2r=|u|_\mathcal{S}$. \\

(2) As in the above case, we consider the abelianization  $\pi: F_{2n}\to \Z^{2n}$. For any $\phi \in \aut(F_{2n})$, note that 
$$w =[x_1,x_2][x_3,x_4]\cdots[x_{2n-1},x_{2n}] \in [F_{2n},F_{2n}],$$
then $w, \phi(w)\in \ker(\pi)$. Moreover, $\phi(w)$ cannot belong to any proper free factor of $F_{2n}$ because it is a test element. So for any $1 \leq i \leq 2n$, both $x_i$ and $x_i^{-1}$ occur at least once in the reduced presentation of $\phi(w)$ with respect to $\mathcal{S}$. Therefore, $|\phi(w)|_\mathcal{S} \geq 4n=|w|_\mathcal{S}$.
\end{proof}

\subsection{Whitehead graphs}

For any element $w\in F_r$, the \textit{(cyclic) Whitehead graph} $Wh_{\mathcal{S}}(w)$ of $w$ with respect to $\mathcal{S}$, is a graph with the vertex set $\mathcal{S} \cup \mathcal{S}^{-1}$. Edges are added as follows: for any two letters $x,y \in \mathcal{S} \cup \mathcal{S}^{-1}$ with $x \neq y$, there is an edge joining $x$ and $y$, if the element $xy^{-1}$ occurs cyclically in the cyclic reduction $w_0$ of $w$ as a subword, i.e., $xy^{-1}$ is a subword of $w_0$, or $x$ is the last letter of $w_0$ and $y^{-1}$ is the first letter of $w_0$.  

By the definition of the Whitehead graph, we have:

\begin{lem}\label{Basic property of Wh graph} Let $F_r$ be a free group of rank $r$ with a basis $\mathcal{S}$.
\begin{enumerate}
\item If $w'$ is a conjugation of $w$ or $w^{-1}$ in $F_r$, then $$Wh_\mathcal{S}(w')=Wh_\mathcal{S}(w)=Wh_\mathcal{S}(w^{-1});$$
\item  Let $u$ and $v$ be two nontrivial cyclically reduced words in $F_r$. If all of the two adjacent letter pairs in a cyclic permutation of $u$, appear as subwords in a cyclic permutation of $v$ or $v^{-1}$, then $Wh_\mathcal{S}(u)$ is a subgraph of $Wh_\mathcal{S}(v)$.
\end{enumerate}
\end{lem}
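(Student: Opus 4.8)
The plan is to reduce both statements to a single bookkeeping observation: by definition the graph $Wh_\mathcal{S}(w)$ depends on $w$ only through its cyclic reduction $w_0$, and an edge joins $x$ to $y$ exactly when $x$ is immediately followed by $y^{-1}$ at some cyclic position of $w_0$. Equivalently, each cyclic adjacency $ab$ occurring in $w_0$ (read cyclically, so including the wrap-around from the last letter to the first) contributes precisely the edge $\{a,b^{-1}\}$, and these are all the edges. First I would record this reformulation, after which everything becomes a matter of tracking how the cyclic two-letter subwords of $w_0$ transform under the relevant operations.

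For part (1), I would treat conjugation and inversion separately. If $w'$ is conjugate to $w$, then its cyclic reduction is a cyclic permutation of $w_0$; since the cyclic-adjacency relation is by construction insensitive to the choice of starting letter, the collection of cyclic adjacencies, and hence the edge set, is unchanged, giving $Wh_\mathcal{S}(w')=Wh_\mathcal{S}(w)$. For the inverse I would write $w_0\equiv a_1a_2\cdots a_n$ and note $w_0^{-1}\equiv a_n^{-1}\cdots a_1^{-1}$, which is again cyclically reduced. Reversing and inverting sends the cyclic adjacency $a_ia_{i+1}$ of $w_0$ to the cyclic adjacency $a_{i+1}^{-1}a_i^{-1}$ of $w_0^{-1}$; applying the edge rule to the latter produces the edge $\{a_{i+1}^{-1},(a_i^{-1})^{-1}\}=\{a_i,a_{i+1}^{-1}\}$, which is exactly the edge produced by $a_ia_{i+1}$ in $w_0$. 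Thus this correspondence is a bijection of cyclic adjacencies preserving the associated edge, so $Wh_\mathcal{S}(w^{-1})=Wh_\mathcal{S}(w)$.

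For part (2), since $u$ is cyclically reduced we have $u_0\equiv u$, so by the reformulation every edge of $Wh_\mathcal{S}(u)$ has the form $\{a,b^{-1}\}$ for some cyclic adjacency $ab$ of $u$. The hypothesis says each such two-letter block $ab$ occurs as a (cyclic) subword of $v$ or of $v^{-1}$, i.e. it is a cyclic adjacency of $v$ or of $v^{-1}$. In the first case the edge rule applied to $v$ already yields $\{a,b^{-1}\}\in Wh_\mathcal{S}(v)$; in the second case it yields $\{a,b^{-1}\}\in Wh_\mathcal{S}(v^{-1})$, and $Wh_\mathcal{S}(v^{-1})=Wh_\mathcal{S}(v)$ by part (1). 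Either way the edge lies in $Wh_\mathcal{S}(v)$, and as both graphs share the vertex set $\mathcal{S}\cup\mathcal{S}^{-1}$, this exhibits $Wh_\mathcal{S}(u)$ as a subgraph of $Wh_\mathcal{S}(v)$.

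The content is entirely definitional, so I expect no serious obstacle; the one place demanding care is the inversion step in part (1), where one must verify that passing to $w_0^{-1}$ (reverse the order and invert each letter) leaves the unordered edge $\{a,b^{-1}\}$ fixed rather than sending it to some other pair. This is precisely what makes the graph invariant under $w\mapsto w^{-1}$ despite the apparent asymmetry of the defining condition $xy^{-1}$. A secondary point is to state cleanly that \emph{appears as a subword in a cyclic permutation of $v$} is the same as \emph{occurs cyclically in $v$}, so that the hypothesis of part (2) matches the edge-generation rule verbatim.
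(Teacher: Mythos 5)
Your proof is correct and is exactly the argument the paper has in mind: the paper states this lemma without proof, introducing it with ``By the definition of the Whitehead graph, we have:'', and your write-up is precisely that definitional verification (cyclic adjacencies $ab$ of the cyclic reduction correspond to edges $\{a,b^{-1}\}$, and this correspondence is invariant under cyclic permutation and under inversion). In particular, your careful check that inverting and reversing sends the adjacency $a_ia_{i+1}$ to $a_{i+1}^{-1}a_i^{-1}$, which produces the same unordered edge, is the one point where the ``obvious'' claim actually needs to be confirmed, and you handle it correctly.
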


\subsection{Cut vertices}

A vertex $v$ of a graph is called a \textit{cut vertex} if the full subgraph spanned by vertices not equal to $v$ is disconnected. Below is the famous Whitehead’s Cut-Vertex Lemma (see \cite{BBW24} for more information).

\begin{lem}[Whitehead, \cite{Whi36}]\label{Whitehead}
	Let $r\geq 2$. If $w$ is a simple element of $F_r$, then for any basis $\mathcal{S}$ of $F_r$, the cyclic Whitehead graph $Wh_{\mathcal{S}}(w)$ contains a cut vertex.
\end{lem}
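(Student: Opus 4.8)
The plan is to run Whitehead's length-minimization machinery relative to the given basis $\mathcal{S}$, reading off the geometry of $Wh_\mathcal{S}(w)$ from the way Whitehead automorphisms change word length. By Lemma \ref{Basic property of Wh graph}(1) the graph is unchanged when $w$ is replaced by its cyclic reduction, so I may assume $w$ is cyclically reduced. Recall that the Whitehead automorphisms of the second kind relative to $\mathcal{S}$ are the maps $\tau_{A,a}$ attached to a multiplier $a\in\mathcal{S}\cup\mathcal{S}^{-1}$ and a set $A\subseteq\mathcal{S}\cup\mathcal{S}^{-1}$ with $a\in A$ and $a^{-1}\notin A$; together with the permutations of $\mathcal{S}\cup\mathcal{S}^{-1}$ they generate $\aut(F_r)$. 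The computational input I would use is Whitehead's length formula, which for cyclically reduced $w$ reads
\begin{equation*}
|\tau_{A,a}(w)|_\mathcal{S}-|w|_\mathcal{S}=d(a)-\mathrm{cap}(A),
\end{equation*}
where $d(a)$ is the degree of the vertex $a$ in $Wh_\mathcal{S}(w)$ and $\mathrm{cap}(A)$ counts the edges of $Wh_\mathcal{S}(w)$ with exactly one endpoint in $A$. Thus $\tau_{A,a}$ strictly shortens $w$ precisely when the cut $\{A,A^{c}\}$ carries strictly more edges than the degree of its multiplier.

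I would then split according to whether $|w|_\mathcal{S}$ is minimal in the orbit $\aut(F_r)\cdot w$. If it is not minimal, then by Whitehead's peak-reduction theorem a single $\tau_{A,a}$ already shortens $w$, so there is an admissible cut with $\mathrm{cap}(A)>d(a)$. If it is minimal, I would invoke simplicity: as $w$ lies in a proper free factor $H$, some automorphism carries $H$ onto a standard factor $\langle x_1,\dots,x_k\rangle$ with $k<r$, so the orbit contains a word omitting a generator; since a length-preserving move cannot introduce an absent generator, minimality forces $w$ itself to omit some $x_i^{\pm1}$ (or $Wh_\mathcal{S}(w)$ to be already disconnected). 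Then $x_i$ and $x_i^{-1}$ are isolated vertices and $Wh_\mathcal{S}(w)$ is disconnected, and since $r\ge 2$ deleting any isolated vertex leaves a disconnected graph, so $w$ has a cut vertex in the sense of the definition above. It therefore remains to handle the reducible case.

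The heart of the matter, and the step I expect to be the main obstacle, is the purely combinatorial implication that an admissible cut with $\mathrm{cap}(A)>d(a)$ forces $Wh_\mathcal{S}(w)$ to be disconnected or to have a cut vertex. The subtlety is that the comparison is not between an arbitrary cut and a vertex degree, but is constrained by the fixed-point-free involution $x\mapsto x^{-1}$ on the vertex set: the multiplier must be separated from its inverse, and it is exactly this involution that eliminates the large ``antipodal'' cuts and makes the statement correct (it is what keeps the $4$-cycle $Wh_\mathcal{S}([x_1,x_2])$ length-minimal, consistent with $[x_1,x_2]$ being non-simple). I would establish this implication along Whitehead's original lines \cite{Whi36}, modelling $Wh_\mathcal{S}(w)$ as the intersection pattern cut on the boundary spheres by an essential sphere system in $\#_r(S^2\times S^1)$: a shortening automorphism becomes a band-slide lowering the intersection number, which produces a separating sphere and hence, dually, a cut vertex. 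Alternatively one can run the equivalent combinatorial analysis (see \cite{BBW24}) of how $\tau_{A,a}$ redistributes the edges incident to $a$. Granting this implication, the reducible case yields a cut vertex and the proof is complete.
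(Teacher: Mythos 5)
The paper does not prove this lemma at all --- it is Whitehead's classical cut-vertex lemma, imported verbatim from \cite{Whi36} (with \cite{BBW24} as a modern reference) --- so your attempt has to stand on its own as a reconstruction of the classical argument, and as written it fails at its central step. The length-change formula is stated with the wrong sign: under the standard conventions (Lyndon--Schupp, and also \cite{BBW24}), for cyclically reduced $w$ and an admissible pair $(A,a)$ one has $|\tau_{A,a}(w)|_\mathcal{S}-|w|_\mathcal{S}=\mathrm{cap}(A)-d(a)$, so $\tau_{A,a}$ shortens $w$ exactly when $\mathrm{cap}(A)<d(a)$, not when $\mathrm{cap}(A)>d(a)$. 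This is not a cosmetic slip, because the ``heart of the matter'' you isolate is formulated with your inequality, and in that form it is simply false. Take $w=x_1^2x_2^2\in F_2$, $A=\{x_1,x_2\}$, $a=x_1$: this is an admissible cut ($a\in A$, $a^{-1}\notin A$) with $\mathrm{cap}(A)=4>2=d(a)$, yet $Wh_\mathcal{S}(w)$ is a $4$-cycle, connected with no cut vertex --- exactly as Lemma \ref{Bestvina} demands, since $x_1^2x_2^2$ is non-simple and of minimal length by Proposition \ref{commutator is minimal}. (One can check the sign directly: $\tau_{A,a}$ sends $x_2\mapsto x_2x_1$, carrying $w$ to $x_1^2x_2x_1x_2x_1$ of cyclic length $6>4$, so this cut certifies a lengthening move, not a shortening one.) Hence the implication you propose to prove cannot be proved; only the correctly signed version (an admissible cut with $\mathrm{cap}(A)<d(a)$ forces disconnection or a cut vertex) is the true classical statement.

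Even after fixing the sign, what remains is not a proof: you explicitly write ``Granting this implication\dots the proof is complete,'' deferring the corrected claim to Whitehead's sphere-system argument or to \cite{BBW24}. But that implication \emph{is} the lemma --- everything else in your outline (peak reduction, the split into minimal and non-minimal cases) is standard bookkeeping around it, so granting it amounts to granting the result. The minimal case also leans on an unproved assertion: that a length-preserving Whitehead move cannot introduce an absent generator. This is true, but it is not free; it requires the (correctly signed) formula --- if the multiplier is the absent letter $x_r^{\pm1}$, the number of occurrences of $x_r^{\pm1}$ created equals $\mathrm{cap}(A)$, which must vanish when the length does not increase because $d(x_r)=0$ --- together with the second half of Whitehead's theorem, that any two minimal-length elements of an orbit are joined by a chain of length-preserving Whitehead moves, in order to transport ``omits a generator'' from the standard representative to $w$ itself. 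In sum, your proposal is a reasonable road map of the classical proof, but its pivotal claim is stated in a false form, and in its corrected form it is assumed rather than established.
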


The following lemma shows the relationship between the cut vertices of the Whitehead graphs and their subgraphs.

\begin{lem}\label{cut vertex}
    Let $Wh_{\mathcal{S}}(w_i) (i=1,2)$ be Whitehead graphs of two elements in a free group $F_r$ with respect to the same basis $\mathcal{S}$. If $Wh_{\mathcal{S}}(w_1)$ is a subgraph of $Wh_{\mathcal{S}}(w_2)$, then the cut vertices of $Wh_{\mathcal{S}}(w_2)$ are also cut vertices of $Wh_{\mathcal{S}}(w_1)$. In particular, $Wh_{\mathcal{S}}(w_1)$ lacks cut vertices implies that $Wh_{\mathcal{S}}(w_2)$ also lacks cut vertices.
\end{lem}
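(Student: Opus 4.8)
The plan is to argue directly from the definition of a cut vertex, using the essential fact that both Whitehead graphs $Wh_{\mathcal{S}}(w_1)$ and $Wh_{\mathcal{S}}(w_2)$ are built on the \emph{same} vertex set $\mathcal{S}\cup\mathcal{S}^{-1}$. Consequently the hypothesis that $Wh_{\mathcal{S}}(w_1)$ is a subgraph of $Wh_{\mathcal{S}}(w_2)$ means precisely that they share all their vertices while the edge set of the former is contained in that of the latter; in other words, $Wh_{\mathcal{S}}(w_1)$ is a \emph{spanning} subgraph of $Wh_{\mathcal{S}}(w_2)$. This is the observation that makes the whole statement work, and it is worth isolating at the outset.

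First I would fix a cut vertex $v$ of $Wh_{\mathcal{S}}(w_2)$. By definition, the full subgraph of $Wh_{\mathcal{S}}(w_2)$ spanned by the remaining vertices is disconnected, so I can partition $(\mathcal{S}\cup\mathcal{S}^{-1})\setminus\{v\}$ into two nonempty sets $A$ and $B$ with no edge of $Wh_{\mathcal{S}}(w_2)$ joining a vertex of $A$ to a vertex of $B$.

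Next I would transport this partition to $Wh_{\mathcal{S}}(w_1)$. Deleting $v$ from $Wh_{\mathcal{S}}(w_1)$ leaves the very same vertex set $(\mathcal{S}\cup\mathcal{S}^{-1})\setminus\{v\}=A\sqcup B$, and since every edge of $Wh_{\mathcal{S}}(w_1)$ is an edge of $Wh_{\mathcal{S}}(w_2)$, there can be no edge of $Wh_{\mathcal{S}}(w_1)$ between $A$ and $B$ either. As $A$ and $B$ are both nonempty, the full subgraph of $Wh_{\mathcal{S}}(w_1)$ spanned by the vertices other than $v$ is disconnected, so $v$ is a cut vertex of $Wh_{\mathcal{S}}(w_1)$. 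This proves the first assertion, and the ``in particular'' clause is then immediate as its contrapositive: if $Wh_{\mathcal{S}}(w_1)$ has no cut vertex, then no vertex can be a cut vertex of $Wh_{\mathcal{S}}(w_2)$.

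There is no serious obstacle here; the argument is a short consequence of the definitions once the common-vertex-set point is in place. The only thing to watch is the direction of the containment: passing from the larger graph $Wh_{\mathcal{S}}(w_2)$ to the smaller graph $Wh_{\mathcal{S}}(w_1)$ is exactly what makes deleting edges preserve the separation of $A$ from $B$. Removing edges can only split components further, never merge them, so disconnectedness is inherited downward along the subgraph relation, and cut vertices correspondingly travel from $w_2$ to $w_1$ rather than the other way around.
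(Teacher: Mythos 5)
Your proof is correct and follows essentially the same route as the paper: both arguments rest on the observation that the two Whitehead graphs share the vertex set $\mathcal{S}\cup\mathcal{S}^{-1}$, so deleting a cut vertex $v$ of $Wh_{\mathcal{S}}(w_2)$ leaves a spanning subgraph of a disconnected graph, which is again disconnected. Your explicit partition into $A$ and $B$ merely spells out the disconnectedness that the paper invokes directly.
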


\begin{proof}
    According to the definition of the Whitehead graph, the vertex set of $Wh_{\mathcal{S}}(w_1)$ and $Wh_{\mathcal{S}}(w_2)$ are both $\mathcal{S} \cup \mathcal{S}^{-1}$, the edge set of $Wh_{\mathcal{S}}(w_1)$ is a subset of the edge set of $Wh_{\mathcal{S}}(w_2)$. If $v$ is a cut vertex of $Wh_{\mathcal{S}}(w_2)$, then the full subgraph $W''$ of $Wh_{\mathcal{S}}(w_2)$ spanned by vertices not equal to $v$ is disconnected, and hence the corresponding full subgraph $W'$ of $Wh_{\mathcal{S}}(w_1)$ is not connected again (because $W'$ is a subgraph of $W''$ with the same vertex set). So $v$ is also a cut vertex of $Wh_{\mathcal{S}}(w_1)$.
\end{proof}

Moreover, Bestvina-Bridson-Wade showed the following.

\begin{lem}\cite[Proposition 2.2]{BBW24}\label{Bestvina0}
	Let $r\geq 2$. If $w \in F_r$ is not contained in a proper free factor and $|w|_\mathcal{S} \leq |\phi(w)|_\mathcal{S}$ for all $\phi \in \aut(F_r)$, then $Wh_{\mathcal{S}}(w)$ contains no cut vertex.
\end{lem}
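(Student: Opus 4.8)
The plan is to establish the contrapositive in the following sharper form: assuming $\mathcal{S}$ minimizes $|w|_\mathcal{S}$, I will show that if $Wh_{\mathcal{S}}(w)$ \emph{does} contain a cut vertex, then $w$ lies in a proper free factor of $F_r$. First I would fix a good representative. By Lemma~\ref{def of minimizes length} the hypothesis ``$|w|_\mathcal{S}\le|\phi(w)|_\mathcal{S}$ for all $\phi\in\aut(F_r)$'' is exactly the statement that $\mathcal{S}$ minimizes $|w|_\mathcal{S}$, and by Lemma~\ref{min length lem} this forces $w$ to be cyclically reduced. Writing $\ell(u)$ for the cyclic length (the length of a cyclic reduction of $u$), a cyclic reduction of $\phi(w)$ is realized by composing $\phi$ with an inner automorphism, so the minimality of $|w|_\mathcal{S}$ over $\aut(F_r)$ upgrades to: $\ell(w)=|w|_\mathcal{S}$ is minimal over the whole $\aut(F_r)$-orbit of $w$. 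From now on I work with the cyclic word $w\equiv y_1\cdots y_n$, on which $Wh_{\mathcal{S}}(w)$ depends only.

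The main tool is the effect on cyclic length of a Whitehead automorphism of the second kind $\sigma=(A,a)$, where $a\in A\subseteq \mathcal{S}\cup\mathcal{S}^{-1}$ and $a^{-1}\notin A$. A junction-by-junction analysis of the two-letter blocks of $w$ (the rightmost letter of $\sigma(y_i)$ is $a$ exactly when $y_i\in A$, the leftmost letter of $\sigma(y_{i+1})$ is $a^{-1}$ exactly when $y_{i+1}^{-1}\in A$, and these cancel precisely when both endpoints of the corresponding edge lie in $A$, while cyclic reducedness prevents the core letters from ever cancelling) yields Whitehead's formula
\[
\ell(\sigma(w))-\ell(w)=E(A,A^{c})-d(a),
\]
where $E(A,A^{c})$ is the number of edges of $Wh_{\mathcal{S}}(w)$ (with multiplicity) having exactly one endpoint in $A$, $A^{c}$ is the complement of $A$ in $\mathcal{S}\cup\mathcal{S}^{-1}$, and $d(a)$ is the degree of the vertex $a$. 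Note that, by the paper's convention, $Wh_{\mathcal{S}}(w)$ has no self-loops, and a putative loop at $a$ would force the block $aa^{-1}$ anyway; so this formula is clean.

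Now let $v$ be a cut vertex, and choose a separation $(\mathcal{S}\cup\mathcal{S}^{-1})\setminus\{v\}=P\sqcup Q$ with no edge of $Wh_{\mathcal{S}}(w)$ between $P$ and $Q$, labelled so that $v^{-1}\in Q$. Applying the formula to $\sigma=(P\cup\{v\},\,v)$, whose complementary set is exactly $Q$, every crossing edge must be incident to $v$ (there are no $P$--$Q$ edges), so $E(P\cup\{v\},Q)$ counts only edges from $v$ into $Q$, whereas $d(v)$ counts all edges at $v$. Writing $d_P(v)$ for the number of edges from $v$ to $P$, this gives
\[
\ell(\sigma(w))-\ell(w)=E(P\cup\{v\},\,Q)-d(v)=-\,d_P(v)\le 0.
\]
Since $\sigma\in\aut(F_r)$ and $w$ has minimal cyclic length, the left-hand side is also $\ge0$, forcing $d_P(v)=0$. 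But then $P$ is joined to nothing outside itself, and since both $P$ and $Q\cup\{v\}$ are nonempty, $Wh_{\mathcal{S}}(w)$ is in fact \emph{disconnected}.

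It remains to pass from a disconnected Whitehead graph to a free-factor containment, and I expect this to be the main obstacle. Writing $\mathcal{S}\cup\mathcal{S}^{-1}=U\sqcup W$ with no edges across, the edge-convention translates into the constraint $y_i\in U\iff y_{i+1}^{-1}\in U$ along $w$. When $U$ is closed under inversion this reads $y_i\in U\iff y_{i+1}\in U$, so membership in $U$ is constant along the cyclic word; hence every letter of $w$ lies on one side, say in $U$, and $w$ belongs to the proper free factor generated by $U\cap\mathcal{S}$, so $w$ is simple. The delicate point is ruling out a split in which $U$ is \emph{not} inversion-closed: such a configuration is exactly the one produced by a proper power whose cyclic length drops under a change of basis (as for $(xy)^2$, whose Whitehead graph is disconnected into non-inversion-closed halves), and it is precisely the minimality of $\ell(w)$ that excludes it. Carrying out this last step---promoting the graph-theoretic disconnection to an inversion-closed partition of the generators---completes the contrapositive, and with it the lemma.
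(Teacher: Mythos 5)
Your attempt should first be situated correctly: the paper does not prove this lemma at all --- it is quoted from Bestvina--Bridson--Wade \cite[Proposition 2.2]{BBW24} --- so there is no internal proof to compare against, and what you are really attempting is a self-contained proof via the classical Whitehead-automorphism machinery. Up to a point you execute that argument correctly: the reduction to cyclic words (minimality over $\aut(F_r)$ does upgrade to minimality of cyclic length, since cyclic reduction is achieved by composing with an inner automorphism), the length-change formula $\ell(\sigma(w))-\ell(w)=E(A,A^{c})-d(a)$ for a Whitehead automorphism $\sigma=(A,a)$, and its application to $\sigma=(P\cup\{v\},v)$ at a cut vertex $v$ (legitimate because $v^{-1}\in Q$), which together with minimality forces $d_P(v)=0$ and hence an honest disconnection of $Wh_{\mathcal{S}}(w)$, are all sound.

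But the proof is not finished, and you say so yourself: the passage from ``$Wh_{\mathcal{S}}(w)$ disconnected'' to ``$w$ lies in a proper free factor'' is carried out only when the disconnection $U\sqcup W$ is inversion-closed, while the non-inversion-closed case is dismissed with ``it is precisely the minimality of $\ell(w)$ that excludes it'' and no argument. As written this is a genuine gap. It can, however, be closed by one more application of the very formula you set up. If some vertex $a\in U$ has $a^{-1}\in W$ and $a$ has positive degree (i.e.\ $a^{\pm1}$ occurs in $w$), then $(U,a)$ is a valid Whitehead automorphism, and since no edge crosses the partition, $\ell\bigl((U,a)\cdot w\bigr)-\ell(w)=E(U,U^{c})-d(a)=-d(a)<0$, contradicting minimality. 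Hence every vertex of positive degree has its inverse on its own side. If some generator pair $s^{\pm1}$ has degree zero, then $s$ does not occur in $w$, so $w\in\langle\mathcal{S}\setminus\{s\}\rangle$, a proper free factor, and you are done; otherwise all vertices have positive degree, the partition is inversion-closed, and your own observation (membership in $U$ is constant along the cyclic word) forces all vertices on the other side to have degree zero --- a contradiction, so this case never occurs. One last correction: your diagnosis that non-inversion-closed disconnections arise ``exactly'' from proper powers is wrong ($xy$ is primitive, not a proper power, yet its Whitehead graph splits into the non-inversion-closed halves $\{x,y^{-1}\}$ and $\{y,x^{-1}\}$); the true statement is simply that such configurations are never length-minimal, by the computation above.
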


Note that the condition  $|u|_\mathcal{S} \leq |\phi(u)|_\mathcal{S}$ for every automorphism $\phi \in \aut(F_r)$ is equivalent to that $\mathcal{S}$ minimizes $|u|_\mathcal{S}$. Therefore, Lemma \ref{Bestvina0} can be restated as follows.

\begin{lem}\label{Bestvina}
	Let $r\geq 2$. If $w \in F_r$ is not simple and $\mathcal{S}$ minimizes $|w|_\mathcal{S}$, then $Wh_{\mathcal{S}}(w)$ contains no cut vertex.
\end{lem}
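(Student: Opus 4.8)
The plan is to recognize that Lemma \ref{Bestvina} is a verbatim reformulation of Lemma \ref{Bestvina0}, so the entire task reduces to matching the two hypotheses through the definitions and equivalences already established in this section. No new geometric or combinatorial input is required; the work is purely a translation of language.

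First I would unwind the hypothesis ``$w$ is not simple.'' By the definition given in the subsection on simple words, an element of $F_r$ is \emph{simple} precisely when it belongs to some proper free factor of $F_r$. Hence the negation, ``$w$ is not simple,'' is literally the statement ``$w$ is not contained in any proper free factor of $F_r$,'' which is exactly the first hypothesis of Lemma \ref{Bestvina0}.

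Next I would convert the minimization hypothesis. Lemma \ref{def of minimizes length} asserts that, for any $u \in F_r$, the basis $\mathcal{S}$ minimizes $|u|_\mathcal{S}$ if and only if $|u|_\mathcal{S} \leq |\phi(u)|_\mathcal{S}$ for every automorphism $\phi \in \aut(F_r)$. Applying this with $u = w$, the hypothesis ``$\mathcal{S}$ minimizes $|w|_\mathcal{S}$'' becomes ``$|w|_\mathcal{S} \leq |\phi(w)|_\mathcal{S}$ for all $\phi \in \aut(F_r)$,'' which is exactly the second hypothesis of Lemma \ref{Bestvina0}.

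With both hypotheses of Lemma \ref{Bestvina0} now verified for $w$, I would simply invoke that lemma to conclude that $Wh_{\mathcal{S}}(w)$ contains no cut vertex, as desired. The only point requiring any care is making sure the two definitional equivalences are cited correctly and applied to the same element $w$; there is no genuine obstacle here, since all the substantive content is carried by the Bestvina--Bridson--Wade result and by Lemma \ref{def of minimizes length}.
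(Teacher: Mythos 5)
Your proposal is correct and matches the paper's own argument exactly: the paper also obtains Lemma \ref{Bestvina} as a restatement of Lemma \ref{Bestvina0}, using the definition of simple elements and the equivalence in Lemma \ref{def of minimizes length} to translate the hypotheses.
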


\section{Main results}

We now study which elements are test elements specifically for monomorphisms of $F_r$, in other words, if such an element is fixed by $\phi \in \mon(F_r)$, then $\phi \in \aut(F_r)$.
Recall that Turner described such elements in \cite{Tu96}.

\begin{prop}[Turner, \cite{Tu96}]\label{Turner2} Let $F_r$ be a free group of rank $r$. Then
 \begin{enumerate}
     \item The test elements in $F_r$ are words not contained in proper retracts; 
     \item The test elements for monomorphisms in $F_r$ are the non-simple elements.
 \end{enumerate}
\end{prop}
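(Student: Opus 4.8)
The plan is to treat both equivalences through one device, the \emph{stable image} $R:=\bigcap_{n\geq 0}\phi^n(F_r)$ of the given (endo- or mono-)morphism. The two ``only if'' directions are the easy ones. For (1), if $w$ lies in a proper retract with retraction $\rho\colon F_r\to R$, then $\rho(w)=w$ while $\rho\notin\aut(F_r)$, so $w$ is not a test element. For (2), if $w$ lies in a proper free factor, write $F_r=H*K$ with $w\in H$ and $K\neq 1$, fix a basis $\{y_1,\dots,y_m\}$ of $K$, and let $\phi$ fix $H$ and $y_2,\dots,y_m$ while sending $y_1\mapsto y_1^2$; this $\phi$ is a monomorphism onto the proper subgroup $H*\langle y_1^2,y_2,\dots,y_m\rangle$, so $\phi\notin\aut(F_r)$ although $\phi(w)=w$, and $w$ is not a test element for monomorphisms.

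For the substantial ``if'' directions I would fix $\phi$ with $\phi(w)=w$ and examine the descending chain $F_r\supseteq\phi(F_r)\supseteq\phi^2(F_r)\supseteq\cdots$ and its intersection $R$. Since $w=\phi^n(w)\in\phi^n(F_r)$ for every $n$, we have $w\in R$; and a short argument using injectivity on the stable part of the chain gives $\phi(R)=R$ with $\phi|_R\in\aut(R)$. Everything then rests on one structural fact, which I regard as the main obstacle:

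\textbf{Key Lemma.} \emph{If $\psi$ is a monomorphism of a finitely generated free group $F$, then $\bigcap_{n}\psi^n(F)$ is a free factor of $F$.}

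\noindent To prove it I would invoke Takahasi's theorem: a descending chain of subgroups of a free group of uniformly bounded rank has intersection equal to a free factor of all sufficiently late terms. As $\psi$ is injective, every $\psi^n(F)$ has rank $\rk F$, so the hypothesis holds and $R=\bigcap_n\psi^n(F)$ is a free factor of $\psi^N(F)$ for some $N$. Since $\psi^N\colon F\to\psi^N(F)$ is an isomorphism (hence carries free-factor decompositions to free-factor decompositions) and since $\psi(R)=R$ forces $(\psi^N)^{-1}(R)=R$, this free factor of $\psi^N(F)$ pulls back to the very same subgroup $R$, now exhibited as a free factor of $F$. Proving Takahasi's theorem itself, e.g.\ by Stallings foldings of the immersions $\psi^{n+1}(F)\hookrightarrow\psi^n(F)$, is where the real work lies.

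Granting the Key Lemma, both conclusions follow quickly. For monomorphisms: if $\phi$ is not onto then $R\subseteq\phi(F_r)\subsetneq F_r$ is a \emph{proper} free factor containing $w$, so $w$ is simple; contrapositively, a non-simple $w$ forces $\phi$ onto, hence an automorphism because free groups are Hopfian. For arbitrary endomorphisms I would first stabilize: $\rk\phi^n(F_r)$ is non-increasing, so it is eventually constant, and Hopficity then makes $\phi$ injective on $H:=\phi^N(F_r)$ for large $N$. Applying the Key Lemma to $\psi=\phi|_H$ shows $R$ is a free factor of $H$ with $\alpha:=\phi^N|_R\in\aut(R)$; composing the surjection $\phi^N\colon F_r\to H$, the free-factor retraction $H\to R$, and $\alpha^{-1}$ produces a retraction $F_r\to R$ fixing $w$. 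If $\phi$ were not onto, this retract would be proper, contradicting the hypothesis on $w$; hence $\phi$ is onto and, by Hopficity, an automorphism.
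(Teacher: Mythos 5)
Your proposal is correct, and since the paper offers no proof of this proposition (it simply cites \cite{Tu96}), the right comparison is with Turner's original argument, which yours essentially reproduces: stable image $R=\bigcap_n\phi^n(F_r)$, Imrich--Turner type invariance $\phi(R)=R$, Takahasi's descending-chain theorem to exhibit $R$ as a free factor of a late term (hence of $F_r$ after pulling back along the isomorphism $\phi^N\colon F_r\to\phi^N(F_r)$), Hopficity, and the retraction $\alpha^{-1}\circ\rho\circ\phi^N$. In particular your Key Lemma and its deduction from Takahasi's theorem are exactly the engine of the standard proof, so the attempt matches the cited source's approach rather than offering a genuinely different route.
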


Although Proposition \ref{Turner2} showed that a test element for monomorphisms  can not be contained in a proper free factor,  it is not obvious whether the condition is satisfied for a given element.
By considering the natural action of the free group $F_r$ on its Cayley graph $Cay(F_r,\mathcal{S})$,  we will give a sufficient condition in geometry for an element to be a test element for monomorphisms. 

\subsection{A key observation of Whitehead graphs}
For any nontrivial element $a\in F_r$, there exists a unique axis $X_a$ of the action of $a$ on the Cayley graph $Cay(F_r,\mathcal{S})$. Note that $a$ acts on $X_a$ as a translation with length $|a_1|_\mathcal{S}$ where $a_1$ is the cyclically reduction of $a$. (See \cite{CM87} for more information of group actions on $\Real$-trees).  The following proposition presents a key property of  Whitehead graphs of cyclically reduced elements.

\begin{prop}\label{non-cyc-reduced}
Let $w \in F_r$ be cyclically reduced. For any nontrivial element $a \in F_r$, if $|X_a \cap X_w| \geq |w|_\mathcal{S}+1$, then $Wh_\mathcal{S}(w)$ is a subgraph of $Wh_\mathcal{S}(a^k)$ for some $k \in \mathbb{Z}$. 
\end{prop}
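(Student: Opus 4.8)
The plan is to work entirely inside the tree $Cay(F_r,\mathcal{S})$ and to translate the statement about Whitehead graphs into one about ``turns'' along axes. First I would set up the dictionary between edges of a Whitehead graph and turns along an axis. Since $w$ is cyclically reduced, its axis $X_w$ passes through the identity vertex and is the bi-infinite geodesic whose consecutive edge-labels spell the periodic word $\cdots w\,w\,w\cdots$, with $w$ translating it by $|w|_\mathcal{S}$. At each interior vertex of a finite subpath of $X_w$ one reads a length-two subword $bb'$ of $\cdots www\cdots$, i.e. an adjacent letter pair of the cyclic word $w$, and by the definition of $Wh_\mathcal{S}(w)$ this pair is precisely an edge of $Wh_\mathcal{S}(w)$. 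The same description applies to $X_a$: writing $a=ca_1c^{-1}$ with $a_1$ the cyclic reduction of $a$, the axis $X_a$ is labelled by $\cdots a_1a_1a_1\cdots$, its turns are the adjacent pairs of the cyclic word $a_1$, and these are the edges of $Wh_\mathcal{S}(a)=Wh_\mathcal{S}(a_1)$ (the equality holds by Lemma \ref{Basic property of Wh graph}(1), since $a_1$ is conjugate to $a$). I would also record that passing to a power is harmless: the cyclic reduction of $a^{k}$ is $a_1^{\,k}$, whose cyclic adjacent pairs coincide with those of $a_1$, so $Wh_\mathcal{S}(a^{k})=Wh_\mathcal{S}(a_1)$ for every nonzero $k$.

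Next I would exploit the length hypothesis. In a tree, the intersection $I:=X_a\cap X_w$ of two bi-infinite geodesics is again a geodesic subsegment, and $|X_a\cap X_w|\geq |w|_\mathcal{S}+1$ says that $I$ contains at least $|w|_\mathcal{S}+1$ edges, hence at least $|w|_\mathcal{S}$ interior vertices. The key point is that at an interior vertex of $I$ both incident edges of the turn lie on $I$, hence on both $X_w$ and $X_a$, so such a turn is simultaneously a turn of $X_w$ and of $X_a$; only at the two endpoints of $I$ do the axes branch apart. Viewing $I$ as a subpath of $X_w$, these $\geq|w|_\mathcal{S}$ interior vertices give at least $|w|_\mathcal{S}$ consecutive turns of $X_w$, and since the labelling of $X_w$ is periodic of period $|w|_\mathcal{S}$, any $|w|_\mathcal{S}$ consecutive turns already exhaust all adjacent pairs of the cyclic word $w$. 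Thus every edge of $Wh_\mathcal{S}(w)$ is realised at an interior vertex of $I$; viewing the same vertex inside $X_a$, that turn is an adjacent pair of the cyclic word $a_1$ (up to reading direction). Consequently every adjacent letter pair of the cyclic word $w$ appears as a subword of a cyclic permutation of $a_1$ or of $a_1^{-1}$.

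At this point Lemma \ref{Basic property of Wh graph}(2), applied with $u=w$ and $v=a_1$ (both cyclically reduced), yields $Wh_\mathcal{S}(w)\subseteq Wh_\mathcal{S}(a_1)=Wh_\mathcal{S}(a^{k})$, so the conclusion holds already with $k=1$. The step I expect to need the most care is the bookkeeping around orientation and the off-by-one count. For orientation: the two axes may induce opposite directions on the common segment $I$, so reading $I$ along the translation direction of $a$ can produce $a_1^{-1}$ rather than $a_1$ — this is exactly why the hypothesis of Lemma \ref{Basic property of Wh graph}(2) permits checking in $v$ or $v^{-1}$, and why the statement allows ``some $k\in\Z$''. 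For the count: I would verify that a segment of $|w|_\mathcal{S}+1$ edges has precisely $|w|_\mathcal{S}$ interior vertices, hence exactly enough common turns to cover one full period of $w$, which is what makes the bound $|w|_\mathcal{S}+1$ sharp.
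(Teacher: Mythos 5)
Your proposal is correct, and while it funnels into the same key lemma as the paper --- Lemma \ref{Basic property of Wh graph}(2), applied after showing that every cyclically adjacent letter pair of $w$ occurs along the axis of $a$ --- the way you get there is genuinely different and more uniform. The paper proceeds by case analysis: whether $a$ is cyclically reduced, and, when it is, whether the endpoints $x,y$ of $X_a\cap X_w$ are long or short relative to $w$; in each case it does explicit prefix bookkeeping (the delicate sub-case being $w\equiv yb$, $x\equiv b^{-1}s^{-1}\cdots$) to produce a concrete power $a^k$ containing every adjacent pair of $w$, including the wrap-around pair $s_2s_1$, as an honest subword. Your turn-counting argument replaces all of this at once: the at least $|w|_\mathcal{S}$ interior vertices of $X_a\cap X_w$ carry turns that are simultaneously turns of $X_w$ and of $X_a$; periodicity of the labelling of $X_w$ (period $|w|_\mathcal{S}$) forces these turns to exhaust all cyclic pairs of $w$; and reading them along $X_a$, possibly against its orientation, exhibits each as a cyclic pair of $a_1$ or of $a_1^{-1}$ --- exactly the ``$v$ or $v^{-1}$'' allowance in the lemma. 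Your route buys several things: no case analysis, the degenerate case $|w|_\mathcal{S}=1$ is subsumed rather than treated separately, and the sharper conclusion that $k=1$ always works, via the (correct) observation that $Wh_\mathcal{S}(a^k)=Wh_\mathcal{S}(a)$ for every $k\neq 0$, so the ``some $k\in\Z$'' in the statement is never really needed. What the paper's route buys is explicitness: it only ever invokes honest (non-cyclic) subwords of a single word $a_1^k$, so no cyclic reading of Lemma \ref{Basic property of Wh graph}(2) is required, and the constructed $k$ matches the figures. That is the one point you should state carefully: your pairs occur as cyclic subwords of $a_1$, with different pairs lying in different cyclic permutations of $a_1$, so you are reading the lemma in the cyclic sense that the definition of the Whitehead graph dictates; if one insists on subwords of a single cyclic permutation, apply the lemma with $v=a_1^2$ instead and use $Wh_\mathcal{S}(a_1^2)=Wh_\mathcal{S}(a_1)$, which costs nothing.
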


\begin{proof}
If $|w|_\mathcal{S}\leq 1$, i.e.,  $w=1$ or $w\in \mathcal{S}\cup\mathcal{S}^{-1}$, then $Wh_\mathcal{S}(w)$ has no edges and hence $Wh_\mathcal{S}(w)$ is a subgraph of $Wh_\mathcal{S}(a)$ clearly. Now we suppose $|w|_\mathcal{S}\geq 2$ and denote $$w \equiv s_1\cdots s_2,$$
 where $s_1, s_2 \in \mathcal{S}\cup\mathcal{S}^{-1} $ ($s_1\neq s_2^{-1}$) are the first and last letters of $w$ respectively.

\textbf{Case (1)}.  The nontrivial element $a \in F_r$ is not cyclically reduced.   (See Figure \ref{fig: non-cyc-reduced}).

In this case, we can denote 
\begin{equation}\label{equ. a presentation}
a \equiv ta_1t^{-1},
\end{equation}
(equality as reduced words) where  $a_1$ is cyclically reduced and $t, a_1\neq 1$. Then the axis $X_a=t(X_{a_1})$ and hence the vertices of $X_a$ contain $$\{\cdots,ta_1^{-2},ta_1^{-1},t,ta_1,ta_1^{2},\cdots\}.$$
The translation length of $a$ on $X_a$ is $|a_1|_\mathcal{S}$. Note that $1 \notin X_a$ but $1 \in X_w$ because $w$ is cyclically reduced.  So we have $1 \notin X_a \cap X_w$. Moreover, we claim that $$d(1, X_a \cap X_w)=|t|_\mathcal{S}.$$
Indeed, if $d(1, X_a \cap X_w)<|t|_\mathcal{S}$, then 
$$d(1, X_a \cap X_w)=d(1, ta_1^j)=|ta_1^j|_\mathcal{S}<|t|_\mathcal{S}$$
for some $j \in \Z \setminus \{0\}$, which contradicts with the presentation Equation  (\ref{equ. a presentation}) of $a$. (Notice that $ta_1^j$ and $t$ are reduced).
	Hence the terminal point of the geodesic from 1 to $X_a \cap X_w$ is $t$.

Since $|X_a \cap X_w| \geq |w|_\mathcal{S}+1$ and $X_a \cap X_w$ is connected (not necessarily compact), there exists $k \in \Z$ such that $|a_1^k|_\mathcal{S} \geq |w|_\mathcal{S}+1$ and all of the two adjacent letter pairs in $w$, including $s_2s_1$, appear as subwords of $a_1^k$. (Note that the symbol of $k$ is determined by the direction of the translation of the action of $a$ on $X_a$.) So by Lemma \ref{Basic property of Wh graph}, the Whitehead graph $Wh_\mathcal{S}(w)$ is a subgraph of $Wh_\mathcal{S}(a^k)$.\\

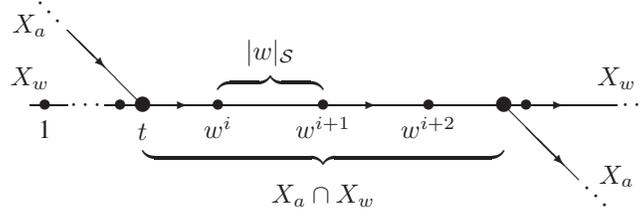
\begin{figure}[h]
\begin{center}
\setlength{\unitlength}{1mm}
\begin{picture}(115,40)(-33,-20)
\put(0,0){\line(1,0){48}}
\put(0,0){\line(-1, 1){10}}
\put(0,0){\line(-1, 0){5}}
\put(-10,0){\line(-1, 0){5}}
\put(48,0){\line(1, 0){15}}
\put(48,0){\line(1, -1){10}}
\put(-14.5,11){\makebox(0,0)[cc]{\begin{rotate}{-5}$\ddots$\end{rotate}}}
\put(58.2,-13.5){\makebox(0,0)[cc]{\begin{rotate}{-5}$\ddots$\end{rotate}}}
\put(66,0){\makebox(0,0)[cc]{$\cdots$}}
\put(-7.3,0){\makebox(0,0)[cc]{$\cdots$}}
\put(-13,0){\makebox(0,0)[cc]{$\bullet$}}
\put(-3,0){\makebox(0,0)[cc]{$\bullet$}}
\put(10,0){\makebox(0,0)[cc]{$\bullet$}}
\put(24,0){\makebox(0,0)[cc]{$\bullet$}}
\put(38,0){\makebox(0,0)[cc]{$\bullet$}}
\put(51,0){\makebox(0,0)[cc]{$\bullet$}}
\put(0,0){\makebox(0,0)[cc]{$\bigcdot$}}
\put(48,0){\makebox(0,0)[cc]{$\bigcdot$}}
\put(51,0){\vector(1,0){5}}
\put(51,-3){\vector(1,-1){5}}
\put(-10,10){\vector(1,-1){5}}
\put(1,0){\vector(1,0){5}}
\put(24,0){\vector(1,0){7}}
\put(-13,-2){\makebox(0,0)[ct]{1}}
\put(0,-2.5){\makebox(0,0)[ct]{$t$}}
\put(10,-1.5){\makebox(0,0)[ct]{$w^i$}}
\put(24,-1.5){\makebox(0,0)[ct]{$w^{i+1}$}}
\put(38,-1.5){\makebox(0,0)[ct]{$w^{i+2}$}}
\put(63,5){\makebox(0,0)[ct]{$X_w$}}
\put(-15,5){\makebox(0,0)[ct]{$X_w$}}
\put(63,-8){\makebox(0,0)[ct]{$X_a$}}
\put(-15,12){\makebox(0,0)[ct]{$X_a$}}
\put(24,-7){\makebox(0,0)[cc]{$\underbrace{\hspace{4.8cm}}$}}
\put(24,-12){\makebox(0,0)[cc]{$X_a \cap X_w$}}
\put(17,3){\makebox(0,0)[cc]{$\overbrace{\hspace{1.4cm}}$}}
\put(17,7){\makebox(0,0)[cc]{$|w|_{\mathcal{S}}$}}
\end{picture}
\end{center}
\caption{$a$ is not cyclically reduced}\label{fig: non-cyc-reduced}
\end{figure}

\textbf{Case (2)}.  The nontrivial element $a \in F_r$ is cyclically reduced  (see Figure \ref{fig: cyc-reduced-1} and  Figure \ref{fig: cyc-reduced-2}). Then $$1 \in X_a \cap X_w.$$ Let  $x,y \in F_r$ be the terminal vertices in the segment $X_a \cap X_w$ respectively, where $y$ is the vertex such that the geodesic from 1 to $y$ has the same direction as the geodesic from 1 to $w$. Then $$|x|_\mathcal{S}+|y|_\mathcal{S}=|X_a \cap X_w|\geq |w|_\mathcal{S}+1.$$

If $|y|_\mathcal{S} \geq |w|_\mathcal{S}+1$ or $|x|_\mathcal{S} \geq |w|_\mathcal{S}+1$, then for sufficiently large $k>0$, there must be 
$$a^k\equiv ws_1\cdots, ~\quad or \quad a^k\equiv \cdots s_2w,$$ i.e., $ws_1$ is a prefix of $a^k$ or $w^{-1}s_2^{-1}$ is a prefix of $a^{-k}$. Note that in either case,  all of the two adjacent letter pairs in $w$, including $s_2s_1$, appear as subwords of $a^k$. So $Wh_\mathcal{S}(w)$ is a subgraph of $Wh_\mathcal{S}(a^k)$  by Lemma \ref{Basic property of Wh graph}.

\begin{figure}[h]
\begin{center}
\setlength{\unitlength}{1mm}
\begin{picture}(115,40)(-33,-20)
\put(0,0){\line(1,0){48}}
\put(0,0){\line(-1, 1){10}}
\put(0,0){\line(-1, 0){10}}
\put(48,0){\line(1, 0){15}}
\put(48,0){\line(1, -1){10}}
\put(-14.5,11){\makebox(0,0)[cc]{\begin{rotate}{-5}$\ddots$\end{rotate}}}
\put(58.2,-13.5){\makebox(0,0)[cc]{\begin{rotate}{-5}$\ddots$\end{rotate}}}
\put(66,0){\makebox(0,0)[cc]{$\cdots$}}
\put(-12.5,0){\makebox(0,0)[cc]{$\cdots$}}
\put(-3,0){\makebox(0,0)[cc]{$\bullet$}}
\put(10,0){\makebox(0,0)[cc]{$\bullet$}}
\put(24,0){\makebox(0,0)[cc]{$\bullet$}}
\put(38,0){\makebox(0,0)[cc]{$\bullet$}}
\put(51,0){\makebox(0,0)[cc]{$\bullet$}}
\put(0,0){\makebox(0,0)[cc]{$\bigcdot$}}
\put(48,0){\makebox(0,0)[cc]{$\bigcdot$}}
\put(51,0){\vector(1,0){5}}
\put(51,-3){\vector(1,-1){5}}
\put(-10,10){\vector(1,-1){5}}
\put(1,0){\vector(1,0){5}}
\put(24,0){\vector(1,0){7}}
\put(10,-2.5){\makebox(0,0)[ct]{1}}
\put(24,-3){\makebox(0,0)[ct]{$w$}}
\put(38,-1.5){\makebox(0,0)[ct]{$w^2$}}
\put(0,-3){\makebox(0,0)[ct]{$x$}}
\put(48,-3){\makebox(0,0)[ct]{$y$}}
\put(63,5){\makebox(0,0)[ct]{$X_w$}}
\put(-15,5){\makebox(0,0)[ct]{$X_w$}}
\put(63,-8){\makebox(0,0)[ct]{$X_a$}}
\put(-15,12){\makebox(0,0)[ct]{$X_a$}}
\put(24,-7){\makebox(0,0)[cc]{$\underbrace{\hspace{4.8cm}}$}}
\put(24,-12){\makebox(0,0)[cc]{$X_a \cap X_w$}}
\put(17,3){\makebox(0,0)[cc]{$\overbrace{\hspace{1.4cm}}$}}
\put(17,7){\makebox(0,0)[cc]{$|w|_{\mathcal{S}}$}}
\end{picture}
\end{center}
\caption{$a$ is cyclically reduced with $|y|_\mathcal{S}\geq |w|_\mathcal{S}+1$}\label{fig: cyc-reduced-1}
\end{figure}
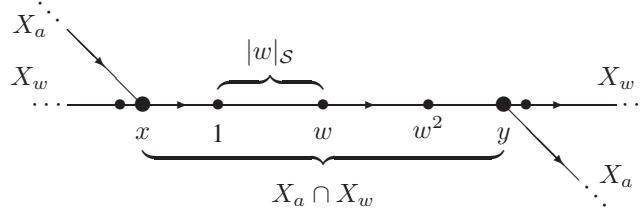

If $|y|_\mathcal{S} \leq |w|_\mathcal{S}$ and $|x|_\mathcal{S} \leq |w|_\mathcal{S}$, then the geodesic from 1 to $y$ is contained in the geodesic from 1 to $w$, so $y$ is a prefix of $w$. Let $w \equiv yb$ and $s \in \mathcal{S}\cup\mathcal{S}^{-1}$ the last letter of $y$. Since $$|x|_\mathcal{S}+|y|_\mathcal{S}=|X_a \cap X_w| \geq |w|_\mathcal{S}+1=|y|_\mathcal{S}+|b|_\mathcal{S}+1,$$ we have $|x|_\mathcal{S}\geq |b|_\mathcal{S}+1$. Moreover, note that $x$ is a prefix of $w^{-1}$,  so we can denote $x \equiv b^{-1}s^{-1}\cdots$. Then for some integer $k$ with $|k|$ large enough, $y$ is a prefix of $a^k$  and $x$ is a prefix of $a^{-k}$. Therefore, the cyclically reduced word
 $$a^k\equiv y\cdots x^{-1}\equiv  y\cdots sb.$$ Recall that $w \equiv yb$ and $s$ is the last letter of $y$, so $Wh_\mathcal{S}(w)$ is a subgraph of $Wh_\mathcal{S}(a^k)$  by Lemma \ref{Basic property of Wh graph}.
\end{proof}

\begin{figure}[h]
\begin{center}
\setlength{\unitlength}{1mm}
\begin{picture}(115,40)(-33,-20)
\put(0,0){\line(1,0){48}}
\put(14,0){\line(-1, 1){15}}
\put(0,0){\line(-1, 0){10}}
\put(48,0){\line(1, 0){15}}
\put(34,0){\line(1, -1){15}}
\put(-5.5,16){\makebox(0,0)[cc]{\begin{rotate}{-5}$\ddots$\end{rotate}}}
\put(49.2,-18.5){\makebox(0,0)[cc]{\begin{rotate}{-5}$\ddots$\end{rotate}}}
\put(66,0){\makebox(0,0)[cc]{$\cdots$}}
\put(-12.5,0){\makebox(0,0)[cc]{$\cdots$}}
\put(-8,0){\makebox(0,0)[cc]{$\bullet$}}
\put(8,0){\makebox(0,0)[cc]{$\bullet$}}
\put(24,0){\makebox(0,0)[cc]{$\bullet$}}
\put(40,0){\makebox(0,0)[cc]{$\bullet$}}
\put(56,0){\makebox(0,0)[cc]{$\bullet$}}
\put(14,0){\makebox(0,0)[cc]{$\bigcdot$}}
\put(34,0){\makebox(0,0)[cc]{$\bigcdot$}}
\put(45,0){\vector(1,0){5}}
\put(37,-3){\vector(1,-1){5}}
\put(4,10){\vector(1,-1){5}}
\put(-3,0){\vector(1,0){5}}
\put(13,0){\vector(1,0){7}}
\put(24,0){\vector(1,0){7}}
\put(-8,-1.5){\makebox(0,0)[ct]{$w^{-2}$}}
\put(8,-1.5){\makebox(0,0)[ct]{$w^{-1}$}}
\put(24,-2.5){\makebox(0,0)[ct]{1}}
\put(40,-3){\makebox(0,0)[ct]{$w$}}
\put(56,-1.5){\makebox(0,0)[ct]{$w^{2}$}}
\put(14,-3){\makebox(0,0)[ct]{$x$}}
\put(34,-3){\makebox(0,0)[ct]{$y$}}
\put(63,5){\makebox(0,0)[ct]{$X_w$}}
\put(-15,5){\makebox(0,0)[ct]{$X_w$}}
\put(49,-8){\makebox(0,0)[ct]{$X_a$}}
\put(-1,12){\makebox(0,0)[ct]{$X_a$}}
\put(24,-7){\makebox(0,0)[cc]{$\underbrace{\hspace{2cm}}$}}
\put(24,-12){\makebox(0,0)[cc]{$X_a \cap X_w$}}
\put(32,3){\makebox(0,0)[cc]{$\overbrace{\hspace{1.6cm}}$}}
\put(32,7){\makebox(0,0)[cc]{$|w|_{\mathcal{S}}$}}
\end{picture}
\end{center}
\caption{$a$ is cyclically reduced with $|x|_\mathcal{S}\leq |w|_\mathcal{S}$ and $|y|_\mathcal{S}\leq |w|_\mathcal{S}$}\label{fig: cyc-reduced-2}
\end{figure}

\subsection{Test elements for monomorphisms}\label{sect 3}



\begin{thm}\label{new simple element}
Let $w \in F_r (r\geq 2)$ be a non-simple element and let $\mathcal{S}$ be a basis of $F_r$ which minimizes $|w|_\mathcal{S}$. For any nontrivial $a \in F_r$, if $|X_a \cap X_w| \geq |w|_\mathcal{S}+1$, then $a$ is again non-simple.
\end{thm}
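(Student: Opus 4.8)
The plan is to treat this theorem as the geometric packaging of the Whitehead-graph machinery already assembled: Lemma \ref{Bestvina}, Proposition \ref{non-cyc-reduced}, Lemma \ref{cut vertex}, and Whitehead's Cut-Vertex Lemma (Lemma \ref{Whitehead}). The whole argument runs through the cut-vertex structure of Whitehead graphs, transporting non-simplicity from $w$ to $a$ via a suitable power $a^k$.

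First I would record the elementary length bound. Since $w$ is non-simple it can be neither trivial nor a single letter of $\mathcal{S}\cup\mathcal{S}^{-1}$, as both of those are simple; hence $|w|_\mathcal{S}\geq 2$. Because $\mathcal{S}$ minimizes $|w|_\mathcal{S}$, Lemma \ref{min length lem} guarantees that $w$ is cyclically reduced, which is exactly the hypothesis required to invoke Proposition \ref{non-cyc-reduced}. Simultaneously, combining the non-simplicity of $w$ with the minimizing property of $\mathcal{S}$, Lemma \ref{Bestvina} tells me that $Wh_\mathcal{S}(w)$ has no cut vertex.

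Next I would feed in the geometric hypothesis. Since $w$ is cyclically reduced and $|X_a\cap X_w|\geq |w|_\mathcal{S}+1$, Proposition \ref{non-cyc-reduced} produces an integer $k\in\Z$ with $Wh_\mathcal{S}(w)$ a subgraph of $Wh_\mathcal{S}(a^k)$. Lemma \ref{cut vertex} then propagates the absence of cut vertices upward: as $Wh_\mathcal{S}(w)$ has no cut vertex and sits inside $Wh_\mathcal{S}(a^k)$, the larger graph $Wh_\mathcal{S}(a^k)$ also has no cut vertex. By the contrapositive of Whitehead's Cut-Vertex Lemma (Lemma \ref{Whitehead}), $a^k$ is therefore non-simple.

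Finally I would descend from $a^k$ to $a$, and this last step is the only point that requires genuine care — indeed it is the main obstacle — since I must ensure $k\neq 0$; otherwise $a^k=1$ is simple and carries no information. Here the bound $|w|_\mathcal{S}\geq 2$ pays off: in the proof of Proposition \ref{non-cyc-reduced} the integer $k$ is chosen so that $a^k$ (or its cyclic reduction $a_1^k$) has length at least $|w|_\mathcal{S}+1\geq 3$, which forces $k\neq 0$. With a nonzero $k$ in hand, I would invoke the fact recorded in Section \ref{sect 2} that a power of a simple element is again simple; its contrapositive states that if the nontrivial power $a^k$ is non-simple, then $a$ itself is non-simple. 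This gives the desired conclusion.
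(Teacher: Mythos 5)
Your proof is correct and follows essentially the same route as the paper's: minimality gives cyclic reducedness (Lemma \ref{min length lem}), Proposition \ref{non-cyc-reduced} embeds $Wh_\mathcal{S}(w)$ into $Wh_\mathcal{S}(a^k)$, Lemmas \ref{Bestvina} and \ref{cut vertex} transfer the absence of cut vertices, and Whitehead's Lemma \ref{Whitehead} gives non-simplicity of $a^k$, hence of $a$. Your explicit check that $k\neq 0$ (via $|w|_\mathcal{S}\geq 2$ and the length of $a_1^k$ in Proposition \ref{non-cyc-reduced}) is a point the paper leaves implicit, but it is a refinement of the same argument rather than a different approach.
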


\begin{proof}
Since $|w|_\mathcal{S}$ is  minimal,  $w$ is cyclically reduced by Lemma \ref{min length lem}. Then by Proposition \ref{non-cyc-reduced}, there is an integer $k \in \mathbb{Z}$ such that $Wh_\mathcal{S}(w)$ is a subgraph of $Wh_\mathcal{S}(a^k)$. 
Moreover, since $w \in F_r$ is non-simple, the Whitehead graph $Wh_\mathcal{S}(w)$ lacks cut vertices by Lemma \ref{Bestvina}. So $Wh_\mathcal{S}(a^k)$ also lacks cut vertices by Lemma \ref{cut vertex}. Then Lemma \ref{Whitehead} implies that $a^k$ is again non-simple. Therefore,  $a$ is non-simple.
\end{proof}

Note that test elements are always test elements for monomorphisms, and the test elements for monomorphisms in $F_r$ are exactly the non-simple elements (see Proposition \ref{Turner2}), so Theorem \ref{new simple element} is equivalent to the following.

\begin{thm}\label{test for Mon}
Let $w \in F_r(r\geq 2)$ be a test element (for monomorphisms) and let $\mathcal{S}$ be a basis of $F_r$ which minimizes $|w|_\mathcal{S}$. For any nontrivial $a \in F_r$, if $|X_a \cap X_w| \geq |w|_\mathcal{S}+1$, then $a$ is again a test element for monomorphisms. 
\end{thm}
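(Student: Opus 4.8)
The plan is to deduce this theorem directly from Theorem \ref{new simple element} by translating its statement through Turner's characterization in Proposition \ref{Turner2}(2), which identifies the test elements for monomorphisms in $F_r$ with precisely the non-simple elements.

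First I would unwind the hypothesis. Since a monomorphism is in particular an endomorphism, $\mon(F_r) \subseteq \edo(F_r)$, so any test element is automatically a test element for monomorphisms; thus the hypothesis that $w$ is a test element (for monomorphisms) guarantees, under either reading of the parenthetical, that $w$ is a test element for monomorphisms. By Proposition \ref{Turner2}(2) this is equivalent to $w$ being non-simple. Hence $w$ satisfies exactly the hypotheses of Theorem \ref{new simple element}: it is non-simple, and $\mathcal{S}$ minimizes $|w|_\mathcal{S}$.

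Next I would apply Theorem \ref{new simple element} verbatim. Given the geometric condition $|X_a \cap X_w| \geq |w|_\mathcal{S}+1$, that theorem yields that $a$ is again non-simple. Finally, invoking Proposition \ref{Turner2}(2) once more, now in the reverse direction, a non-simple element of $F_r$ is a test element for monomorphisms; therefore $a$ is a test element for monomorphisms, which is the desired conclusion.

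The only point requiring care is the logical bookkeeping between the two notions: the hypothesis permits $w$ to be either a test element or merely a test element for monomorphisms, whereas the conclusion asserts only the weaker "for monomorphisms" property for $a$. Because Proposition \ref{Turner2}(2) pins the "for monomorphisms" notion exactly to non-simplicity, both directions of the argument pass through the single equivalence \emph{test element for monomorphisms $\iff$ non-simple}, so no genuine obstacle arises here. The substantive content is entirely contained in Theorem \ref{new simple element} together with its supporting Whitehead-graph machinery (Proposition \ref{non-cyc-reduced} and Lemmas \ref{cut vertex}, \ref{Whitehead}, \ref{Bestvina}), and the present statement is a direct reformulation of it.
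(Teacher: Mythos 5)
Your proposal is correct and matches the paper's own argument exactly: the paper likewise derives Theorem \ref{test for Mon} from Theorem \ref{new simple element} by noting that test elements are in particular test elements for monomorphisms and that, by Proposition \ref{Turner2}(2), the latter are precisely the non-simple elements. Your careful handling of the parenthetical hypothesis via the inclusion $\mon(F_r)\subseteq\edo(F_r)$ is the same bookkeeping the paper performs implicitly.
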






By Example \ref{commutator is test element} and Proposition \ref{commutator is minimal}, the elements $u=x_1^2 x_2^2\cdots x_r^2\in F_r$ 
and $w=[x_1,x_2][x_3,x_4]\cdots[x_{2n-1},x_{2n}]\in F_{2n}$ are test elements (and hence test elements for monomorphisms), and have minimal length. Therefore, as corollaries, we have:

\begin{cor}\label{u1}
Let $F_{2n}$ be a free group of rank $2n$ with basis $\mathcal{S}=\{x_1, \ldots,x_{2n}\}$. Then any cyclically reduced word $a\in F_{2n}$ containing the subword
    $$[x_1,x_2][x_3,x_4]\cdots[x_{2n-1},x_{2n}]x_1$$ is non-simple, or equivalently,  is a test element for monomorphisms.
\end{cor}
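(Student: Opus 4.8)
The plan is to verify the geometric hypothesis of Theorem \ref{new simple element} with the fixed non-simple word $w=[x_1,x_2][x_3,x_4]\cdots[x_{2n-1},x_{2n}]$. First I record the relevant data for $w$: it is cyclically reduced, its first letter is $x_1$ and its last letter is $x_{2n}^{-1}$, and $|w|_\mathcal{S}=4n$. By Proposition \ref{commutator is minimal}(2) the basis $\mathcal{S}$ minimizes $|w|_\mathcal{S}$, and by Example \ref{commutator is test element} together with Proposition \ref{Turner2} the element $w$ is non-simple. The point of appending $x_1$ is that, since $w$ begins with $x_1$, the word $wx_1$ is a prefix of $w^2$; it has length $|wx_1|_\mathcal{S}=4n+1=|w|_\mathcal{S}+1$, which is exactly the overlap length demanded by Theorem \ref{new simple element}.

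Next I reduce $a$ to a convenient position. Because simplicity is invariant under conjugation, I am free to replace $a$ by any cyclic permutation of itself. As $a$ is cyclically reduced and contains $wx_1$ as a subword, some cyclic permutation $a'$ of $a$ begins with $wx_1$, say $a'\equiv wx_1v$; such an $a'$ is again cyclically reduced of the same length and is conjugate to $a$, so it suffices to prove that $a'$ is non-simple. This rotation step is the crux. The axes $X_a$ and $X_w$ both pass through the identity, so if $wx_1$ sat in the interior of $a$ the intersection $X_a\cap X_w$ could be as small as a single vertex; rotating $a$ so that $wx_1$ occurs right at the basepoint is what forces a long overlap, and it is the only genuinely delicate point in the argument.

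Finally I compute the overlap for $a'$. Both $X_{a'}$ and $X_w$ pass through $1$, and along their positive rays from $1$ the first $4n+1$ letters agree and spell $wx_1$: for $X_w$ because $w^2$ (hence the periodic ray $w^\infty$) begins with $wx_1$, and for $X_{a'}$ because $a'$ itself begins with $wx_1$. Hence both axes contain the common geodesic segment from $1$ to the vertex $wx_1$, so $|X_{a'}\cap X_w|\geq 4n+1=|w|_\mathcal{S}+1$. Theorem \ref{new simple element} now applies and shows that $a'$ is non-simple, whence $a$ is non-simple; by Proposition \ref{Turner2}(2) this is the same as saying that $a$ is a test element for monomorphisms. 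Everything beyond the reduction step is routine bookkeeping with word lengths and first/last letters.
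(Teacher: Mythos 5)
Your proof is correct and follows essentially the same route as the paper: cyclically permute $a$ so that the subword $wx_1$ sits at the front, use invariance of simplicity under conjugation, verify the axis overlap $|X_{a'}\cap X_w|\geq |w|_\mathcal{S}+1$, and apply Theorem \ref{new simple element}. In fact you spell out the overlap computation (via $wx_1$ being a common prefix of $a'$ and of the periodic ray $w^\infty$) in more detail than the paper, which merely asserts the inequality.
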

\begin{proof} Let $w=[x_1,x_2][x_3,x_4]\cdots[x_{2n-1},x_{2n}]$. By the assumption, we can denote the cyclically reduced word $a\equiv a'wx_1a''$ ($a', a''$ may be trivial). Then  $wx_1a''a'$ is also cyclically reduced and the intersection of the two axes satisfies
$$|X_{wx_1a''a'} \cap X_w| \geq |w|_\mathcal{S}+1.$$
So $wx_1a''a'$ is non-simple by Theorem \ref{test for Mon}. Note that a conjugate of a simple element is again simple,  so $a=a'\cdot wx_1a''a'\cdot a'^{-1}$ is non-simple.
\end{proof}

By a parallel argument as in the prove above,  we have the following.

\begin{cor}\label{stronger version of Kapovich}
Let $F_n$ be a free group of rank $n$ with basis $\mathcal{S}=\{x_1,\ldots, x_{n}\}$. Then any cyclically reduced word in $F_n$ containing the subword $x_1^2 x_2^2 \cdots x_n^2 x_1$ is non-simple, or equivalently, is a test element for monomorphisms.
\end{cor}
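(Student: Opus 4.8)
The plan is to run the argument of Corollary~\ref{u1} essentially verbatim, replacing the commutator product by $u = x_1^2 x_2^2 \cdots x_n^2$. The two ingredients that powered the previous proof are available again: by Example~\ref{commutator is test element} the element $u$ is a test element in $F_n$ (hence a test element for monomorphisms), and by Proposition~\ref{commutator is minimal}(1) the basis $\mathcal{S}$ minimizes $|u|_\mathcal{S}$, with $|u|_\mathcal{S}=2n$. Thus $u$ satisfies exactly the hypotheses imposed on $w$ in Theorem~\ref{test for Mon}, and it remains only to produce the required overlap of axes.

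So suppose $a \in F_n$ is cyclically reduced and contains $u x_1 = x_1^2 \cdots x_n^2 x_1$ as a subword; I would write $a \equiv a' u x_1 a''$, where $a'$ and $a''$ may be empty. First I would pass to the cyclic permutation $b := u x_1 a'' a'$, which is again cyclically reduced since cyclic permutations of a cyclically reduced word are cyclically reduced, and which is conjugate to $a$ via $a = a' b (a')^{-1}$. Next I would verify the axis condition $|X_b \cap X_u| \geq |u|_\mathcal{S}+1$. Because $u$ is cyclically reduced, its axis $X_u$ passes through $1, u, u^2, \dots$; in particular the vertex $u x_1$ lies on $X_u$, being the first vertex on the geodesic segment from $u$ to $u^2$. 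On the other hand $u x_1$ is a prefix of $b$, so it also lies on $X_b$. Hence the geodesic from $1$ to $u x_1$, of length $2n+1 = |u|_\mathcal{S}+1$, is contained in both axes, which gives the bound.

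With this in hand, Theorem~\ref{test for Mon} applies (with $w=u$ and with $b$ in the role of $a$) and yields that $b$ is a test element for monomorphisms, equivalently non-simple by Proposition~\ref{Turner2}(2). Finally, since a conjugate of a simple element is again simple, the element $a = a' b (a')^{-1}$ is non-simple as well, which is the assertion.

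I expect the only genuinely delicate point to be the verification that $|X_b \cap X_u| \geq |u|_\mathcal{S}+1$, and in particular the role of the trailing letter $x_1$. That extra letter is what pushes the overlap from length $2n$ up to $2n+1$; more conceptually, it forces the cyclic ``wrap-around'' adjacency $x_n x_1$ of $u$ to appear as an honest (non-cyclic) subword of $b$, which is precisely the adjacency needed so that Proposition~\ref{non-cyc-reduced} realizes $Wh_\mathcal{S}(u)$ as a subgraph of $Wh_\mathcal{S}(b^k)$. Without the appended $x_1$ the prefix would record only the length-$2n$ segment, and the closing edge of the Whitehead graph of $u$ could be missed.
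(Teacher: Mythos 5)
Your proposal is correct and follows essentially the same route as the paper, which proves this corollary ``by a parallel argument'' to Corollary~\ref{u1}: write $a \equiv a' u x_1 a''$, pass to the cyclically reduced cyclic permutation $u x_1 a'' a'$, verify the axis overlap $|X_{ux_1a''a'} \cap X_u| \geq |u|_\mathcal{S}+1$, apply Theorem~\ref{test for Mon}, and conclude by conjugation invariance of simplicity. Your explicit verification of the overlap (via the common geodesic from $1$ to $ux_1$) and the remark on the role of the trailing $x_1$ are details the paper leaves implicit, but the argument is the same.
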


\begin{rem} 
Note that both above corollaries can be established without using Theorem \ref{test for Mon}, we present them here to demonstrate the potential applications of this theorem. Moreover, Corollary \ref{stronger version of Kapovich} is stronger than Gupta-Kapovich \cite[Corollary 2.18]{GK19}:
\textit{Let $F_n$ be a free group of rank $n$ with basis $\mathcal{S}=\{x_1,\ldots, x_{n}\}$. If a cyclically reduced word in $F_n$ contains the subword $x_1^2 x_2^2 \cdots x_n^2 x_1^2$, then it is non-simple (and hence not primitive) in $F_n$.}
\end{rem}
Finally, we have:

\begin{exam}
Suppose $F_{r}$ is a free group of rank $r$ with basis $\mathcal{S}=\{x_1,x_2,\ldots,x_{r}\}$.  Then 
$$u_1=x_1^2 x_2^2\cdots x_r^2 x_1 ~ \quad \mathrm{and}~\quad u_2=[x_1,x_2][x_3,x_4]\cdots[x_{2n-1}, x_{2n}]x_1 ~\mathrm{if} ~r=2n,$$
are test elements for monomorphisms but not test elements.

\end{exam}
\begin{proof}
By Corollary \ref{u1} and Corollary \ref{stronger version of Kapovich}, both of $u_1$ and $u_2$ are test elements for monomorphisms. To show neither of them are test elements, let $\phi_i: F_{r} \to \langle u_i \rangle=\Z$ ($i=1,2$) defined as
$$\phi_1(x_1)=u_1^{-1}, ~\phi_1(x_2)=u_1^2,~~\phi_1(x_j)=1, ~j\geq 3;$$
$$\phi_2(x_1)=\phi_2(x_2)=\ldots =\phi_2(x_{2n})=u_2.$$
Then $\phi_i(u_i)=u_i$. Note that both of $\phi_1$ and $\phi_2$ are endomorphisms but not automorphisms. Therefore, neither of $u_1$ and $u_2$ are test elements.
\end{proof}


\end{document}